\documentclass[twoside]{article}

\usepackage[utf8]{inputenc}
\usepackage[english]{babel}
\usepackage{amsthm}
\usepackage{amsmath}

\theoremstyle{plain}
\newtheorem{theorem}{Theorem}

\theoremstyle{definition}
\newtheorem{definition}{Definition}

\setcounter{secnumdepth}{0}
\sloppy

\begin{document}

\title{On a matching arrangement of a graph and LP-orientations of a matching polyhedron}
\author{A.I.Bolotnikov
}
\maketitle

\section{Introduction} 

The number of regions of a hyperplane arrangement can be calculated with its characteristic polynomial \cite{zasl}. This result is useful in solving problems of enumerative combinatorics. For example, in \cite{irm}, \cite{irm2}, \cite{irm3}  a special hyperplane arrangement with several related geometric constructions was introduced in order to obtain an asymptotic estimation of  the number of threshold functions and the number of singular $\pm 1$-matrices .

In the paper \cite{gre-zasl} the connection between the chromatic polynomial of a graph and the number of acyclic orientations of that graph is depicted as a property of a specific hyperplane arrangement. This arrangement is called a graphical arrangement, and the set of all edges of the initial graph is used for its construction. The characteristic polynomial of the graphical arrangement is equal to the chromatic polynomial of the initial graph, and the number of regions of the graphical arrangement is equal to the number of acyclic orientations in the initial graph.

In the paper \cite{edmonds} a linear program for a maximum matching problem for a general graph was created. A matching polyhedron was defined by the set of its inequalities, and the correspondence between its vertices and matchings of the initial graph was shown. The concept of LP-orientations is connected to linear programming as well \cite{halt-klee}.

A matching arrangement was defined in the paper \cite{bol}. That paper also describes some of matching arrangement's properties and properties of its characteristic polynomial.

This paper contains a description of a connection between the matching arrangement and the matching polyhedron. A bijection between regions of the matching arragement and LP-orientations of the matching polyhedron is constructed. This bijection allows to calculate the number of LP-orientations of the matching polyhedron with the characteristic polynomial of the matching arrangement.

\section{Basic definitions}

\begin{definition} \textit{A hyperplane arrangement} is a finite set of affine hyperplanes in some
vector space V. In this paper $V = R^n$.
\end{definition}
\begin{definition}If all hyperplanes intersect in one point, an arrangement is called \textit{central} .
\end{definition}
\begin{definition}
\textit{A region} of an arrangement  is a connected component of
the complement  of the union of hyperplanes.
\end{definition}

\begin{definition}

Let $(r_1,r_2,r_3,...,r_n)$ be a sequence of edges, such that $\forall i$ edges $r_i$ and $r_{i+1}$ are incident to the same vertex. If $r_n$ and $r_1$ are incident to the same vertex, then such sequence is called\textit{a cycle}, otherwise it is called  \textit{a path}. A path or a cycle is called simple, if there are no pairs of edges that are incident to the same vertex, other than ($r_i$,$r_{i+1}$) and ($r_1$,$r_n$). \textit{A length} of a path or a cycle is the amount of edges in it.
\end{definition}
\begin{definition}
Let $G(V,E), |E|=n$ be a graph without loops and parallel edges, let $N:E\rightarrow \{1,2,..,n\}$ be a numeration of edges in G. Let P be a sequence of edges $(r_1,r_2,r_3,...,r_n)$ that form a simple path or a simple cycle with even number of edges in graph G. A hyperplane that corresponds to sequence P is a hyperplane with an equation $x_1-x_2+x_3 -...+(-1)^{n+1}x_n=0$. Let F be a set of all sequences of edges in G that form a simple path or a simple cycle with even number of edges in G. Then matching arrangement is a set of all hyperplanes that correspond to elements of F. A matching arrangement will be denoted as MA(G,N).           
\end{definition}
\begin{definition} \cite{edmonds} Let $G(V,E), |E|=n$ be a graph without loops and parallel edges. A matching polyhedron is a convex polyhedron in $R^n$ that is defined by a system of inequalities that consists of inequalities of the following three types:
\begin{enumerate}
    \item $\forall e_i \in E : x_i \geq 0$
    \item $\forall v \in V : \sum\limits_{e_i \text{ is incident } v} x_i \leq 1$
    \item $\forall S=\{v_{j_1},v_{j_2},..., v_{j_{2k+1}} \}, S\subseteq V : \sum\limits_{e_i=(v_p,v_q) \in E,v_p \in S, v_q \in S} x_i\leq k$
\end{enumerate}
\end{definition}
Each matching of a graph $G(V,E), |E| = n$ corresponds to a vertex of an n-dimensional boolean cube in the following way. Let $M$ be a matching in $G$. Then the corresponding vertex of a boolean cube is $(x_1, x_2, x_3, …, x_n)$,where $x_i = 1$, if an edge $e_i$ belongs to M, and $x_i = 0$ otherwise.
Therefore, a bijection between the set of all matchings in $G$ and a subset $D$ of the set of vertices of a boolean cube is constructed.

\begin{theorem}\cite{edmonds} $D$ is a set of vertices of a matching polyhedron.
\end{theorem}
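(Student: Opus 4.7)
The statement has two directions to establish: first, that every image vector $v_M$ of a matching $M$ is a vertex of the matching polyhedron, and second, that every vertex of the polyhedron arises in this way. I would treat them separately.

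For the first direction I would begin by checking feasibility of $v_M$ under each of the three inequality families. Non-negativity is immediate since $v_M \in \{0,1\}^n$; the vertex inequality $\sum_{e_i \ni v} x_i \leq 1$ holds because a matching contains at most one edge at each vertex; and for an odd set $S$ of size $2k+1$, the subset of $M$ with both endpoints in $S$ is itself a matching on at most $2k+1$ vertices, hence contains at most $k$ edges. To upgrade from feasibility to extremality, I would observe that inequalities 1 and 2 together imply $0 \leq x_i \leq 1$ on the whole polyhedron, so the polyhedron is contained in the cube $[0,1]^n$. Since $v_M$ is a $\{0,1\}$-vector, it is already an extreme point of the cube, and an extreme point of a larger convex set remains extreme in any convex subset containing it; this gives extremality of $v_M$ in the matching polyhedron for free.

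For the second direction I would argue by contradiction. Assume $v$ is a vertex of the matching polyhedron with at least one fractional coordinate, and let $F \subseteq E$ be its fractional support. The plan is to exhibit a nonzero $w \in R^n$ supported on $F$ such that both $v + \varepsilon w$ and $v - \varepsilon w$ remain in the polyhedron for sufficiently small $\varepsilon>0$, contradicting extremality. The natural candidates are combinatorial: an even cycle in $(V,F)$ supplies an alternating $\pm 1$ pattern that preserves every vertex-sum and every subset-sum; a path in $(V,F)$ joining two vertices at which the degree inequality 2 is strict yields an analogous alternating direction with harmless $\pm 1$ endpoint adjustments.

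The hard part, and where I expect to spend most of the effort, is the interaction with the odd-set inequalities of type 3. Even when $F$ admits an even cycle or a suitable path, a naively chosen alternating $w$ may violate some tight odd-set constraint $\sum_{e_i \subseteq S} x_i = k$. I would handle this by showing that if no alternating cycle or path perturbation of the kind above exists, then the family of all tight constraints at $v$ (zero coordinates, tight vertex equations, and tight odd-set equations) can be arranged so that the odd-set members form a laminar family; a rank count against the number of non-zero coordinates of $v$ then forces $v$ to be $\{0,1\}$-valued, contradicting the assumption that $v$ had a fractional coordinate. Establishing laminarity of a maximal tight odd-set system and carrying out this dimension argument is the technical core of Edmonds' original result, and is the step I anticipate to be the principal obstacle.
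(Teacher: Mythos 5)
The paper offers no proof of this theorem at all --- it is imported from Edmonds via the citation --- so there is no internal argument to compare yours against; I can only judge the proposal on its own. Your first direction is complete and correct: feasibility of $v_M$ under all three constraint families is checked properly, and the observation that constraints 1 and 2 confine the polyhedron to the cube $[0,1]^n$, so that any $\{0,1\}$-point of the polyhedron inherits extremality from the cube, is a clean and economical way to get extremality. Combined with the (implicit) remark that a $\{0,1\}$-point satisfying the degree constraints is the indicator vector of a matching, this fully settles one inclusion.

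The gap is in the converse, which is where the entire content of Edmonds' theorem lives. You rightly identify alternating perturbations along even cycles and along paths between slack vertices as the tool against constraints of types 1 and 2, and you rightly identify the tight odd-set constraints as the obstruction; but the step you defer --- ``laminarize the tight odd sets and finish by a rank count'' --- is not a routine closing move, and as described it does not obviously close. A laminar family on $V$ can have up to $2|V|-1$ members, so counting tight nonnegativity constraints, tight degree constraints, and a laminar family of tight odd-set constraints against the required rank $|E|$ yields no contradiction by itself; the arguments that actually work either extract further structure inside each tight odd set (the fractional support must induce a connected, nearly perfectly matched subgraph there), or bypass the rank count entirely, as in Lov\'asz's proof by minimal counterexample, which contracts a tight odd set $S$ with $3\le |S|<|V|$ and applies induction to the contracted graphs, or as in Edmonds' original algorithmic proof. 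The book of Schrijver already cited in this paper contains a complete version of the contraction argument in the same chapter quoted for adjacency of vertices. As it stands, your proposal proves the easy inclusion and only gestures at the hard one, so the deferred step must be replaced by one of these worked-out routes before the argument can be considered a proof.
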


Let $P$ be a polyhedron in $R^n$, and let $\phi: R^n 
\rightarrow R$ be a linear function that takes different values on adjacent vertices of $P$. Then the graph of $P$ can be oriented by $\phi$ the following way. An edge $(v_i, v_j)$ that connects vertices $v_i$ and $v_j$, is oriented from $v_i$ to $v_j$, if $\phi(v_i) < \phi(v_j)$.
If an orientation can be obtained from a linear function this way, then this orientation is called an LP-orientation . 

\section{Main result}

\begin{theorem}
Let $G(V,E) , |E| = n$ be a graph without loops and parallel edges, and let $P(G)$ be a matching polyhedron of $G$. Let $L(P(G))$ be a set of LP-orientations of $P(G)$.
The mapping $F:L(P(G)) \rightarrow 2^{R^{n}}$ is defined the following way. For every LP-orientation O \[F(O)= \{(\alpha_1, \alpha_2, ..., \alpha_n) \in R^n |f(x_1,x_2,…,x_n) =\]\[= \alpha_{1}x_1 + \alpha_{2}x_2 + ... + \alpha_{n}x_n \text{induces O}\}\].

Let $RG(MA(G))$ be a set of regions of the matching arrangement of G, $RG(MA(G)) = \{R_1, R_2,...,R_n\}$.

Then:
\begin{enumerate}
    \item $\forall O \in L(P(G))$  $ F(O)$ is a region of MA(G),that is $F(O)\in RG(MA(G))$. Therefore, F can be considered as a mapping from $L(P(G))$ to $RG(MA(G))$;
    \item $F: L(P(G))\rightarrow RG(MA(G))$ is a bijection.
\end{enumerate}
\end{theorem}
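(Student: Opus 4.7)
The plan is to identify the defining equations of the matching arrangement $MA(G)$ with the indifference conditions of linear objective functions across edges of the matching polyhedron $P(G)$. Concretely, I want to show that a linear functional $f(x_1,\dots,x_n)=\alpha_1 x_1+\cdots+\alpha_n x_n$ takes different values on adjacent vertices of $P(G)$ if and only if the coefficient vector $(\alpha_1,\dots,\alpha_n)$ lies off every hyperplane of $MA(G)$, and that two such vectors induce the same orientation exactly when they lie in the same region.

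First, I would recall (or derive from \cite{edmonds}) the characterization of edges of $P(G)$: two vertices corresponding to matchings $M_1$ and $M_2$ are adjacent in $P(G)$ if and only if the symmetric difference $M_1\triangle M_2$ is the edge set of a single simple alternating path or a single simple alternating even cycle of $G$. This is the key combinatorial input. Given such an adjacency along a sequence $(r_1,r_2,\dots,r_k)$ whose odd-indexed edges lie in $M_1$ and even-indexed edges in $M_2$, a direct computation gives
\[
f(M_1)-f(M_2)=\alpha_{N(r_1)}-\alpha_{N(r_2)}+\alpha_{N(r_3)}-\cdots+(-1)^{k+1}\alpha_{N(r_k)},
\]
which is precisely the signed sum defining the hyperplane of $MA(G)$ attached to the sequence $(r_1,\dots,r_k)$. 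Thus $f$ is constant on the edge $\{M_1,M_2\}$ exactly when $(\alpha_1,\dots,\alpha_n)$ lies on that hyperplane, and the orientation of this edge under $f$ is determined by the sign of that linear form, i.e.\ by which side of the hyperplane the coefficient vector occupies.

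Next, I would verify that every hyperplane of $MA(G)$ actually comes from some adjacency, so that no hyperplane is spurious. Given any simple path or simple even cycle $(r_1,\dots,r_k)$ in $G$, the definition of \emph{simple} ensures that non-consecutive edges share no vertex, so the two sets $\{r_1,r_3,\dots\}$ and $\{r_2,r_4,\dots\}$ are themselves matchings of $G$; their symmetric difference is the whole sequence, so by the adjacency characterization they are adjacent vertices of $P(G)$. Combining this with the previous step, I obtain the desired logical equivalence between ``$f$ induces an LP-orientation'' and ``$(\alpha_1,\dots,\alpha_n)$ avoids all hyperplanes of $MA(G)$,'' together with the equivalence between ``$f$ and $f'$ induce the same orientation'' and ``their coefficient vectors lie in the same region of $MA(G)$.''

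Both parts of the theorem then follow immediately: for any LP-orientation $O$, the set $F(O)$ is exactly the set of coefficient vectors lying on a particular side of every hyperplane of $MA(G)$, hence a region; surjectivity of $F$ is witnessed by taking any point in a given region and reading off the orientation it induces, and injectivity follows because different LP-orientations differ on at least one polyhedron edge, hence yield vectors separated by the corresponding hyperplane. The main obstacle I expect is the first step: rigorously establishing the path/even-cycle characterization of $1$-faces of $P(G)$, which requires more than Edmonds' vertex description and is the technical heart on which the bijection rests.
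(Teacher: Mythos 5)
Your proposal is correct and follows essentially the same route as the paper: both rest on the Schrijver/Edmonds characterization of adjacency in $P(G)$ via simple alternating paths and even cycles, identify the difference $f(M_1)-f(M_2)$ with the defining linear form of the corresponding hyperplane of $MA(G)$, and deduce that $F(O)$ is exactly one region while every region yields an LP-orientation. The adjacency characterization you flag as the technical heart is simply cited by the paper (Schrijver, Theorem 25.3) rather than proved.
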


\begin{proof}
Let $\Pi(v)$ be a matching that corresponds to a vertex $v$ of the matching polyhedron. Matching polyhedra have the following property \cite[Theorem 25.3]{schrijver}: vertices $v_1$ and $v_2$ of a matching polyhedron $P(G)$ are adjacent, if and only if $\Pi(v_1)\triangle\Pi(V_2)$ is either a simple path or a simple cycle with an even length.

Let  O be an LP-orientation of $P(G)$, and let $a = (a_1,...,a_n)$ and $b=(b_1,...,b_n)$ be elements of $F(O)$. 
Let's assume,that $a$ and $b$ belong to different regions of MA(G). Then there is either a simple path or simple cycle with an even length $(e_{i_1}, e_{i_2},....,e_{i_k}),e_{i_j} \in E$ ,such that points $a$ and $b$ are separated by the hyperplane $x_{i_1} - x_{i_2} + x_{i_3}...+(-1)^{k+1}x_{i_k} = 0$. Without loss of generality, let $a_{i_1} - a_{i_2} + a_{i_3}...+(-1)^{k+1}a_{i_k} > 0$, $b_{i_1} - b_{i_2} + b_{i_3}...+(-1)^{k+1}b_{i_k} < 0$. By construction, set of edges $\{e_{i_1},e_{i_3},e_{i_5},...\}$ and $\{e_{i_2}, e_{i_4}, e_{i_6},...\}$ are matchings, and the corresponding vertices in $P(G)$ are connected with an edge $t$ of $P(G)$. The edge $t$ is oriented differently in LP-orientations that are obtained from functions  $f_a(x)=a_1 x_1 + a_2 x_2 +....+ a_n x_n$ and $f_b(x)=b_1 x_1 + b_2 x_2 +....+ b_n x_n$, therefore $a$ and $b$ cannot belong to $F(O)$ at the same time. 

Let's assume now, that $a$ belongs to the border of a region R, which means it belongs to a hyperplane $x_{i_1} - x_{i_2} + x_{i_3}...+(-1)^{k+1}x_{i_k} = 0$.Then the function $f_a$ has the same value on vertices of P(G) that correspond to matchings $\{e_{i_1},e_{i_3},e_{i_5},...\}$ and $\{e_{i_2}, e_{i_4}, e_{i_6},...\}$, and are connected by an edge in $P(G)$. This means that an LP-orientation cannot be obtained from $f_a$, and $a$ cannot belong to F(O).

As a result, if O is an element of $L(P(G))$, then $F(O)$ is a subset of some region $R_i$. Let $a \in F(O), b\in R_i$, $b$ does not belong to F(O). Since $b$ does not belong to hyperplanes of the arrangement MA(G), values of $f_b$ on adjacent vertices of P(G) are different, which means that an LP-orientation $\hat O$ can be obtained from $f_b$. Let's assume, that an edge $t$ is oriented differently in O and in $\hat O$. Let $v_1$ and $v_2$ be vertices of t, $\Pi(v_1) = \{e_{j_1},e_{j_3},...\}$, $\Pi(v_2) = \{e_{j_2},e_{j_4},...\}$. Then $H:x_{j_1}-x_{j_2} + x_{j_3}-... = 0$ is a hyperplane from MA(G), and $a$ and $b$ are separated by H, which is a contradiction to an assumption, that $a$ and $b$ belong to the same region $R_i$. Therefore $O = \hat O$,  $F(O)=R_i$, and $F: L(P(G))\rightarrow RG(MA(G))$ is injective.

Let R be a region of MA(G), $b \in R$. Since $b$ does not belong to any hyperplane from MA(G), values of $f_b$ on adjacent vertices of P(G) are different, which means that an LP-orientaion $\bar O$ can be obtained from $f_b$. Then, as it was previously proved, $F(\bar O) = R$. This means that F is surjective. Therefore, F is a bijection.
\end{proof}

Author is grateful to  A.A.Irmatov for the formulation of the problem and for valuable discussions of these results.

\clearpage


\begin{thebibliography}{00}
\bibitem{zasl}
 T. Zaslavsky, \emph{Facing up to arrangements: face-count formulas for partitions of space by hyperplanes}.
// \emph{Memoirs of the American Mathematical Society},1975, vol 1, issue 154


\bibitem{irm}
A.A.Irmatov, \emph{Arrangement of Hyperplanes and the Number of Threshold Functions}
// \emph{Acta Applicandae Mathematicae}, 2001,vol 68,issue 1,pages  211--226


\bibitem{irm2}
A.A.Irmatov, \emph{Bounds for the the number of threshold functions}
// \emph{Discrete Mathematics and Applications}, 1996, vol 6, issue 6, pages 569--584

\bibitem{irm3}
A.A.Irmatov,\emph{Asymptotics of the Number of Threshold Functions and the Singularity Probability of Random {±1}-Matrices}
// \emph{Doklady Mathematics}, 2020,vol 101, pages  247--249




\bibitem{gre-zasl}
 C.Greene, T. Zaslavsky,\emph{ On the interpretation of Whitney numbers
through arrangements of hyperplanes, zonotopes,
non-radon partitions, and
orientations of graphs.}
// \emph{Transactions of the American Mathematical Society}, 1983,vol 280,issue 1
,pages 97--126

\bibitem{edmonds}
J.Edmonds, \emph{ Maximum matching and a polyhedron with 0,1-vertices.}
// \emph{Journal of Research of the National Bureau of Standards Section B Mathematics and Mathematical Physics},1965, vol 69B,pages 125--130

\bibitem{halt-klee}
F.Holt, V.Klee, \emph{A proof of the strict monotone 4-step conjecture}
// \emph{Contemporary Mathematics}, 1999, vol 223,pages 201--216

\bibitem{schrijver}
A. Schrijver, \emph{Combinatorial Optimization: Polyhedra and Efficiency}
// \emph{Journal of Computer and System Sciences} 2003,vol B

\bibitem{bol}
A.I.Bolotnikov \emph{On the matching arrangement of a graph and properties of its characteristic polynomial}
//	arXiv:2205.14754

\end{thebibliography}
\end{document}